\renewcommand{\subsection}{\@startsection{subsection}{1}{0pt}{-3.25ex plus -1ex minus-.2ex}{1.5ex plus.2ex}{\normalfont\it}}
\renewcommand{\section}{\@startsection{section}{1}{\parindent}{3.5ex plus 1ex minus .2ex}{2.3ex plus.2ex}{\sc}}
\renewcommand{\phi}{\varphi}
\renewcommand{\geq}{\geqslant}
\renewcommand{\epsilon}{\varepsilon}
\renewcommand{\kappa}{\varkappa}
\DeclareMathOperator{\spec}{Spec}
\DeclareMathOperator{\corr}{Cor}
 \DeclareMathOperator{\nis}{nis}
\newcommand{\cc}{\mathcal}
\newcommand{\bb}{\mathbb}
\newtheorem{thm}{Theorem}[section]
\newtheorem{cor}[thm]{Corollary}
\newtheorem{lem}[thm]{Lemma}
\newtheorem{defs}[thm]{Definition}
\begin{document}

\footskip30pt


\title{Comparing motives of smooth algebraic varieties}
\author{Grigory Garkusha}
\address{Department of Mathematics, Swansea University, Fabian Way, Swansea SA1 8EN, UK}
\email{g.garkusha@swansea.ac.uk}


\keywords{Motivic homotopy theory, generalized correspondences, motivic cohomology, triangulated categories of motives}

\subjclass[2010]{14F42; 14F05}

\begin{abstract}
Given a perfect field of exponential characteristic $e$,  the
$\corr$-, $K_0^\oplus$-, $K_0$- and $\bb K_0$-motives of smooth algebraic varieties with
$\bb Z[1/e]$-coefficients are shown to be locally quasi-isomorphic to each other. Moreover, 
it is proved that their triangulated categories
of motives with $\bb Z[1/e]$-coefficients are equivalent. An application
is given for the bivariant motivic spectral sequence.
\end{abstract}

\maketitle

\thispagestyle{empty} \pagestyle{plain}

\newdir{ >}{{}*!/-6pt/@{>}} 


\section{Introduction}

The purpose of the paper is to compare motives of smooth algebraic varieties
corresponding to various categories of correspondences. We also
investigate relations between associated triangulated categories of motives. We work in the framework of
symmetric monoidal strict $V$-categories of correspondences defined in~\cite{GG}.
They are just an abstraction of basic properties of the category of finite correspondences $\corr$.

Given a functor $f:\cc A\to\cc B$ between two such categories of correspondences,
we prove in Theorem~\ref{him} that whenever the base field $k$ is perfect
of exponential characteristic $e$ and $f$ is such that the induced morphisms of complexes of Nisnevich sheaves
   $$f_*:\bb Z_{\cc A}(q)[1/e]\to\bb Z_{\cc B}(q)[1/e],\quad q\geq 0,$$
are quasi-isomorphisms, then for every
$k$-smooth algebraic variety $X$ the morphisms
of twisted motives of $X$ with $\bb Z[1/e]$-coefficients
   $$M_{\cc A}(X)(q)\otimes\bb Z[1/e]\to M_{\cc B}(X)(q)\otimes\bb Z[1/e]$$
are quasi-isomorphisms. Furthermore,
we prove in Theorem~\ref{him} that the induced functors
between triangulated categories of motives
   $$DM_{\cc A}^{eff}(k)[1/e]\to DM_{\cc B}^{eff}(k)[1/e],\quad DM_{\cc A}(k)[1/e]\to DM_{\cc B}(k)[1/e]$$
are equivalences.

Using Theorem~\ref{him} together with theorems of Suslin~\cite{Sus} and Walker~\cite{Wlk} 
(as well as a result of~\cite{GP2}) comparing motivic complexes 
associated to the categories of correspondences
$\corr$, $K_0^\oplus$, $K_0$ and $\bb K_0$, we identify
their motives of smooth algebraic varieties with $\bb Z[1/e]$-coefficients.
Moreover, their triangulated categories
of motives with $\bb Z[1/e]$-coefficients are shown to be equivalent (see Theorem~\ref{him1}).

Another application is given in Theorem~\ref{him2} for the bivariant motivic
spectral sequence in the sense of~\cite{GP}.

Throughout the paper we denote by $Sm/k$ the category of smooth
separated schemes of finite type over the base field $k$.

\section{Preliminaries}

Throughout this paper we work with symmetric monoidal strict $V$-categories
of correspondences in the sense of~\cite{GG}.
The categories $\corr$, $K_0^{\oplus}$, $K_0$, $\bb K_0$ are examples of
such categories (see~\cite{GP1,GP2,Sus,SV1,Wlk} for more details).

Given a symmetric monoidal strict $V$-category of correspondences $\cc A$,
it is standard to define the category of $\cc A$-motives $DM_{\cc A}^{eff}(k)$. By
definition (see~\cite{GG}), it is a full subcategory of the derived category
of Nisnevich sheaves with $\cc A$-correspondences consisting of those complexes
whose cohomology sheaves are $\bb A^1$-invariant. As usual,
the stabilization of $DM_{\cc A}^{eff}(k)$ in the $\bb G_m$-direction
leads to the category $DM_{\cc A}(k)$ (see~\cite{GG} for details).
If $R=\bb Z[S^{-1}]$ is the ring of fractions of $\bb Z$ with respect to a multiplicatively closed
set of intergers $S$, then $\cc A\otimes R$, whose
objects are those of $\cc A$ but morphisms are tensored with $R$, is
a symmetric monoidal strict $V$-category of correspondences (see~\cite{GG}).

\begin{defs}{\rm
Following~\cite{Voe1,SV1,Sus} the {\it $\cc A$-motive of a smooth algebraic variety\/} $X\in Sm/k$, denoted by
$M_{\cc A}(X)$, is the normalized complex of Nisnevich $\cc A$-sheaves associated with the simplicial sheaf
   $$n\longmapsto\cc A(-\times\Delta^n,X)_{\nis},\quad\Delta^n=\spec k[t_0,\ldots,t_n]/(t_0+\cdots+t_n-1).$$

}\end{defs}

In what follows we identify simplicial (pre-)sheaves with their normalized complexes
by using the Dold--Kan correspondence.
Also, if necessary we associate Eilenberg--Mac~Lane $S^1$-spectra to (pre-)sheaves of simplicial Abelian groups.
The reader will always be able to recover any of these associations/identifications.

We have a motivic bispectrum
   \begin{equation*}\label{bispa}
    M_{\cc A}^{\bb G_m}(X):=(EM(M_{\cc A}(X)),EM(M_{\cc A}(X\wedge\bb G_m^{\wedge 1})),
        EM(M_{\cc A}(X\wedge\bb G_m^{\wedge2})),\ldots),
   \end{equation*}
where each entry $EM(M_{\cc A}(X\wedge\bb G_m^{\wedge q})$, $q\geq 0$, is the Eilenberg--Mac~Lane 
$S^1$-spectrum associated with the simplicial $\cc A$-sheaf
$n\longmapsto\cc A(-\times\Delta^n,X\wedge\bb G_m^{\wedge q})_{\nis}$.
$M_{\cc A}(X\wedge\bb G_m^{\wedge q})$ will also be denoted by $M_{\cc A}(X)(q)$. In what follows we denote by
$\bb Z_{\cc A}(q)$ the complex $M_{\cc A}(pt)(q)[-q]$, $pt:=\spec k$ (the shift is cohomological).

\begin{defs}\label{zap}{\rm
Following terminology of~\cite[Section~6]{GP} the {\it bivariant $\cc A$-motivic cohomology groups\/} are defined by
    $$H_{\cc A}^{p,q}(X,Y):=H^p_{\nis}(X,\cc A(-\times\Delta^\bullet,Y\wedge\bb G_m^{\wedge q})_{\nis}[-q]),$$
where the right hand side stands for Nisnevich hypercohomology groups of $X$ with coeffitients in
$\cc A(-\times\Delta^\bullet,Y\wedge\bb G_m^{\wedge n})_{\nis}[-q]$ (the shift is cohomological). If
$\cc A=\corr$, we shall write $H_{\cc M}^{p,q}(X,Y)$ to denote $H_{\cc A}^{p,q}(X,Y)$. We also call
$H_{\cc M}^{*,*}(X,Y)$ the {\it bivariant motivic cohomology groups}.

Following~\cite{GP2} we say that the bigraded presheaves $H^{*,*}_{\cc A}(-,Y)$ satisfy
the {\it cancellation property\/} if all maps
   $$\beta^{p,q}:H^{p,q}_{\cc A}(X,Y)\to H^{p+1,q+1}_{\cc A}(X\wedge\bb G_m,Y)$$
induced by the structure maps of the spectrum $M_{\cc A}^{\bb G_m}(Y)$ are
isomorphisms.

}\end{defs}

Given $Y\in Sm/k$, denote by
   $$M_{\cc A}^{\bb G_m}(Y)_f:=(EM(M_{\cc A}(Y))_f,EM(M_{\cc A}(Y\wedge\bb G_m^{\wedge 1}))_f,EM(M_{\cc A}(Y\wedge\bb G_m^{\wedge2}))_f,\ldots),$$
where each $EM(M_{\cc A}(Y\wedge\bb G_m^{\wedge n}))_f$ is a fibrant replacement of $EM(M_{\cc A}(Y\wedge\bb G_m^{\wedge n}))$
in the injective local stable model structure of motivic $S^1$-spectra.
It is worth to note that each $EM(M_{\cc A}(Y\wedge\bb G_m^{\wedge n}))_f$ can be constructed within
the category of chain complexes of Nisnevich $\cc A$-sheaves and then taking the corresponding $S^1$-spectrum
(this can be shown similar to~\cite[5.12]{GP}).

\begin{lem}[see \cite{GG}]\label{brus}
The bigraded presheaves $H^{*,*}_{\cc A}(-,Y)$ satisfy the
cancellation property if and only if $M_{\cc A}^{\bb G_m}(Y)_f$ is 
motivically fibrant as an ordinary motivic bispectrum.
\end{lem}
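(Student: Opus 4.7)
The plan is to unravel the definition of motivic fibrancy for a bispectrum and match it up, level by level, with the cancellation maps $\beta^{p,q}$. First I would recall that an object of the category of motivic bispectra is motivically fibrant exactly when each constituent motivic $S^1$-spectrum is motivically fibrant and, in addition, the adjoints of the $\bb G_m$-structure maps
\[
   \sigma_q^{\vee}\colon EM(M_{\cc A}(Y)(q))_f \longrightarrow \underline{\Map}\bigl(\bb G_m,EM(M_{\cc A}(Y)(q+1))_f\bigr)
\]
are motivic stable equivalences of $S^1$-spectra. The first condition holds by the very construction of $M_{\cc A}^{\bb G_m}(Y)_f$, so the lemma reduces to identifying the $\Omega_{\bb G_m}$-spectrum condition with the cancellation property.

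Next I would identify the bivariant cohomology groups with the homotopy groups of the fibrant $q$-th level. Because $EM(M_{\cc A}(Y)(q))_f$ is motivically fibrant, for every $X\in Sm/k$ one has a natural isomorphism
\[
   \pi_{-p}\bigl(EM(M_{\cc A}(Y)(q))_f(X)\bigr) \cong \bb H^{p}_{\nis}\bigl(X,\cc A(-\times\Delta^\bullet,Y\wedge\bb G_m^{\wedge q})_{\nis}\bigr) = H^{p,q}_{\cc A}(X,Y)[q],
\]
where the last identification is just the cohomological shift built into Definition~\ref{zap}. Under this identification the map on homotopy groups induced by $\sigma_q^{\vee}$ evaluated at $X$ is, by construction of the bispectrum $M_{\cc A}^{\bb G_m}(Y)$, precisely the cancellation morphism $\beta^{p,q}\colon H^{p,q}_{\cc A}(X,Y)\to H^{p+1,q+1}_{\cc A}(X\wedge\bb G_m,Y)$, where the $\bb G_m$ on the right appears because passing to the mapping spectrum $\underline{\Map}(\bb G_m,-)$ on sections over $X$ is the same as evaluating on $X\wedge\bb G_m$.

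Combining these observations, $\sigma_q^{\vee}$ is a motivic stable equivalence of motivically fibrant $S^1$-spectra iff it induces isomorphisms on all sheaves of stable homotopy groups, iff (by Nisnevich descent for fibrant spectra) it induces isomorphisms on $\pi_*$ of sections over every $X\in Sm/k$, iff every $\beta^{p,q}$ is an isomorphism. Running the equivalence simultaneously over all $q\geq 0$ yields the lemma.

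The step I expect to need the most care is the identification of the adjoint of the $\bb G_m$-structure map with the cancellation map $\beta^{p,q}$: one must check that the shift conventions in Definition~\ref{zap} are consistent with the passage from $M_{\cc A}(Y)(q+1)$ to $\underline{\Map}(\bb G_m,M_{\cc A}(Y)(q+1))$ on sections, and verify that the fibrant replacement in the injective local stable structure really does compute the Nisnevich hypercohomology appearing in the definition of $H^{*,*}_{\cc A}(-,Y)$, so that no extra $\bb A^1$-localization step interferes. Once these bookkeeping issues are handled, the equivalence is formal.
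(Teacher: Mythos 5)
The paper does not prove Lemma~\ref{brus}; it is cited to~\cite{GG}, so there is no internal proof to compare against. That said, your outline is the standard and, I believe, correct way to establish this statement: characterize fibrancy of a $(\mathrm{S}^1,\bb G_m)$-bispectrum as levelwise fibrancy of the $S^1$-levels together with the $\Omega_{\bb G_m}$-condition on the adjoint structure maps, then match those adjoints with the cancellation maps $\beta^{p,q}$ via the identification of stable homotopy groups with the bivariant groups from Definition~\ref{zap}. The bookkeeping you flag (the $[-q]$ shift and the passage to $\underline{\Map}(\bb G_m,-)$) is indeed where the identification must be checked.

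One point you underweight: you assert that "the first condition holds by the very construction of $M_{\cc A}^{\bb G_m}(Y)_f$", but the paper only says the levels are fibrant replacements in the \emph{injective local stable} structure, i.e.\ Nisnevich-locally fibrant, not a priori $\bb A^1$-local. That the Nisnevich-fibrant replacement of $EM(M_{\cc A}(Y)(q))$ is already motivically fibrant is not automatic bookkeeping; it uses that the homology presheaves of $\cc A(-\times\Delta^\bullet,Y\wedge\bb G_m^{\wedge q})$ are $\bb A^1$-invariant presheaves with $\cc A$-transfers and that their Nisnevich cohomology is again $\bb A^1$-invariant (strict $\bb A^1$-invariance over a perfect field, which is part of what "strict $V$-category of correspondences" buys in~\cite{GG}). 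You do mention in your last paragraph that "no extra $\bb A^1$-localization step interferes", which is the right worry, but this should be recognized as the substantive input rather than a bookkeeping check; once it is supplied, the rest of your argument is formal and correct.
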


In what follows we shall write $SH(k)$ to denote the stable homotopy category of motivic bispectra.

\begin{cor}[see \cite{GG}]\label{bruscor}
The presheaves $H^{*,*}_{\cc A}(-,Y)$ are represented in $SH(k)$ by the
bispectrum $M_{\cc A}^{\bb G_m}(Y)_f$. Precisely,
   $$H^{p,q}_{\cc A}(X,Y)=SH(k)(X_+,S^{p,q}\wedge M_{\cc A}^{\bb G_m}(Y)_f),\quad p,q\in\bb Z,$$
where $S^{p,q}=S^{p-q}\wedge\bb G_m^{\wedge q}$.
\end{cor}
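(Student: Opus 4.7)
The plan is to deduce the corollary as an essentially formal consequence of Lemma~\ref{brus}, together with the standard Dold--Kan/Eilenberg--Mac~Lane dictionary between (pre-)sheaves of chain complexes and $S^1$-spectra. First, I would invoke Lemma~\ref{brus} to obtain that $M_{\cc A}^{\bb G_m}(Y)_f$ is motivically fibrant as a bispectrum (under the cancellation property, which is the implicit setting in which the displayed formula has content). Once motivic fibrancy is in hand, $SH(k)$-morphism groups out of the cofibrant object $X_+$ can be read off level-wise without any further replacement.

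Next, I would use the decomposition $S^{p,q}=S^{p-q}\wedge\bb G_m^{\wedge q}$ and the $\bb G_m$-$\Omega$-spectrum structure of $M_{\cc A}^{\bb G_m}(Y)_f$ to absorb the Tate twist $\bb G_m^{\wedge q}$ into the $q$-th level of the bispectrum, obtaining an identification
$$SH(k)(X_+, S^{p,q}\wedge M_{\cc A}^{\bb G_m}(Y)_f) \;=\; SH_{S^1}(k)\bigl(X_+,\, S^{p-q}\wedge EM(M_{\cc A}(Y\wedge\bb G_m^{\wedge q}))_f\bigr).$$
The right-hand side is a purely $S^1$-stable computation at the $q$-th level.

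Because each $EM(M_{\cc A}(Y\wedge\bb G_m^{\wedge q}))_f$ is, by construction, fibrant in the injective local stable model structure of motivic $S^1$-spectra, standard Eilenberg--Mac~Lane representability identifies the right-hand side with the Nisnevich hypercohomology group $H^{p-q}_{\nis}(X, M_{\cc A}(Y\wedge\bb G_m^{\wedge q}))$. Matching against Definition~\ref{zap}, in which the coefficient complex is shifted cohomologically by $[-q]$, gives precisely $H^{p,q}_{\cc A}(X,Y)$.

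The main obstacle I expect is pure bookkeeping: tracking the interplay between the simplicial suspension $S^{p-q}$, the Tate twist $\bb G_m^{\wedge q}$, and the cohomological shift $[-q]$, and verifying naturality of the Dold--Kan identification with the specific fibrant replacement used to build $M_{\cc A}^{\bb G_m}(Y)_f$. Once these bidegree normalisations are aligned the corollary amounts to a reformulation of Lemma~\ref{brus}, so essentially no new technical content beyond that lemma is required.
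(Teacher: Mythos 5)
Your reconstruction is correct, but there is no internal proof in this paper to compare against: Corollary~\ref{bruscor}, like Lemma~\ref{brus}, is cited verbatim from~\cite{GG} (``see \cite{GG}'') rather than proved here. That said, the route you sketch --- motivic fibrancy of $M_{\cc A}^{\bb G_m}(Y)_f$ from Lemma~\ref{brus}, the $\bb G_m$-$\Omega$-bispectrum reduction to an $S^1$-level computation at weight $q$, then Eilenberg--Mac~Lane representability of Nisnevich hypercohomology --- is exactly the standard argument, and the degree bookkeeping you flag does close up: the cohomological shift $[-q]$ in Definition~\ref{zap} accounts for the $S^{p-q}$ in $S^{p,q}=S^{p-q}\wedge\bb G_m^{\wedge q}$, while the $\bb G_m^{\wedge q}$ is absorbed into the $q$-th level of the bispectrum. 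The one thing you should make explicit rather than leave tacit is the cancellation hypothesis: without it Lemma~\ref{brus} does not deliver motivic fibrancy, the $\Omega$-bispectrum reduction fails, and the displayed formula no longer computes the groups of Definition~\ref{zap}. (The paper imposes cancellation globally only from Section~3 onward; the corollary inherits that hypothesis from the setting of~\cite{GG}.)
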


\section{Motivic complexes and triangulated categories of motives}

Throughout this section we assume $f:\cc A\to\cc B$ to be a functor of symmetric
monoidal strict $V$-categories of correspondences satisfying the cancellation property.
We always assume that $f$ is the identity map on objects. 


\begin{thm}\label{him}
Suppose $k$ is a perfect field of exponential characteristic $e$. 
If the morphism of complexes of Nisnevich sheaves
   $$f_*:\bb Z_{\cc A}(q)[1/e]=\bb Z_{\cc A\otimes\bb Z[1/e]}(q)\to
        \bb Z_{\cc B}(q)[1/e]=\bb Z_{\cc B\otimes\bb Z[1/e]}(q),\quad q\geq 0,$$
is a quasi-isomorphism, then for every $X\in Sm/k$ the morphism
of twisted motives of $X$ with $\bb Z[1/e]$-coefficients
   $$M_{\cc A}(X)(q)\otimes\bb Z[1/e]\to M_{\cc B}(X)(q)\otimes\bb Z[1/e]$$
is a quasi-isomorphism. Furthermore, the induced functor
   $$DM_{\cc A}^{eff}(k)[1/e]\to DM_{\cc B}^{eff}(k)[1/e]$$
is an equivalence of triangulated categories.
\end{thm}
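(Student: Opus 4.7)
My approach is to establish the quasi-isomorphism of twisted motives first and then to deduce the triangulated equivalence as a formal consequence. Since both $\cc A$ and $\cc B$ are assumed to satisfy the cancellation property, Lemma~\ref{brus} implies that for every $Y\in Sm/k$ the motivic bispectra $M_{\cc A}^{\bb G_m}(Y)_f$ and $M_{\cc B}^{\bb G_m}(Y)_f$ are motivically fibrant as ordinary bispectra. Consequently, a stable equivalence between them in $SH(k)$ is equivalent to a level-wise local quasi-isomorphism of Eilenberg--Mac~Lane $S^1$-spectra, which at level $q$ is precisely the assertion that $M_{\cc A}(Y)(q)\to M_{\cc B}(Y)(q)$ is a local quasi-isomorphism of complexes of Nisnevich sheaves. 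Via the identification $\bb Z_{\cc A}(q)=M_{\cc A}(pt)(q)[-q]$, the hypothesis furnishes exactly this at $Y=pt$ after inverting $e$; equivalently, the map $f_*\colon M_{\cc A}^{\bb G_m}(pt)_f[1/e]\to M_{\cc B}^{\bb G_m}(pt)_f[1/e]$ is a stable equivalence in $SH(k)$.

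\textbf{Upgrading from the point to arbitrary $X$.} The core technical step is to promote this stable equivalence from $Y=pt$ to an arbitrary $X\in Sm/k$. The guiding principle is that $M_{\cc A}^{\bb G_m}(pt)_f$ carries a commutative ring structure in $SH(k)$ (the motivic Eilenberg--Mac~Lane spectrum attached to $\cc A$), that each $M_{\cc A}^{\bb G_m}(X)_f$ is a module over it functorially in $X$, and that the functor $f$ respects these structures. After inverting $e$, the hypothesis promotes to a stable equivalence of ring spectra, which induces an equivalence of the associated module categories that identifies $M_{\cc A}^{\bb G_m}(X)_f[1/e]$ with $M_{\cc B}^{\bb G_m}(X)_f[1/e]$. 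A companion viewpoint, which is plausibly the one carried out explicitly, is to consider the cone $C(X):=\cone(f_*)$: it defines a functor $Sm/k\to SH(k)$ that satisfies Nisnevich descent and $\bb A^1$-invariance, vanishes at $X=pt$, and is then forced to vanish on all of $Sm/k$ by exploiting its module structure over the ring spectrum $M_{\cc A}^{\bb G_m}(pt)_f[1/e]$. Making this upgrade rigorous is the principal obstacle of the proof.

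\textbf{Triangulated equivalence.} Granted the first conclusion, the induced functor $f_!\colon DM_{\cc A}^{eff}(k)[1/e]\to DM_{\cc B}^{eff}(k)[1/e]$ sends the generators $M_{\cc A}(X)$ to $M_{\cc B}(X)$, so it is essentially surjective. For full faithfulness, by the triangulated structure and compactness of generators it suffices to check on the generators, where the Hom groups are identified with Nisnevich hypercohomology groups $\bb H^n_{\nis}(X,M_{\cc A}(Y))$ and $\bb H^n_{\nis}(X,M_{\cc B}(Y))$ respectively; the first conclusion forces these to agree after inverting~$e$.
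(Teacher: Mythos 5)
Your plan correctly identifies the shape of the argument (reduce to the point, then propagate to general $X$), and you correctly flag that the propagation step is where all the work lies. But that step is precisely where the proposal remains a sketch rather than a proof, and the two routes you gesture at do not, as stated, close the gap.

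The paper's mechanism is specific and essential: by the proof of the generalized R\"ondigs--{\O}stv{\ae}r theorem~\cite[5.3]{GG}, over a perfect field with $e$ inverted the natural map
$H(\cc A\otimes\bb Z[1/e])(pt)\wedge X_+\to H(\cc A\otimes\bb Z[1/e])(X)$
is a stable motivic equivalence of bispectra, and likewise for $\cc B$. This is the result that converts the levelwise local equivalence at $Y=pt$ into a stable motivic equivalence at arbitrary $X$: smashing the equivalence at the point with $X_+$ preserves level motivic equivalences (Jardine), and the R\"ondigs--{\O}stv{\ae}r comparison transports that equivalence to $H(\cc A\otimes\bb Z[1/e])(X)\to H(\cc B\otimes\bb Z[1/e])(X)$, whence to the motivically fibrant replacements, which are then levelwise schemewise equivalent. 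Your ``module over the ring spectrum'' picture encodes related ideas but stops short: knowing that $M_{\cc A}^{\bb G_m}(X)_f$ is a module over $M_{\cc A}^{\bb G_m}(pt)_f$ does not by itself say that the module is free (i.e.\ of the form $M_{\cc A}^{\bb G_m}(pt)_f\wedge X_+$), nor that passing through the ring map base-changes one to the other. Similarly, your cone argument observes that $C(X)$ satisfies Nisnevich descent and $\bb A^1$-invariance and vanishes at the point, but that alone does not force vanishing for all $X$; without the free-module / base-change input you again need the R\"ondigs--{\O}stv{\ae}r ingredient. You have identified exactly where the perfect-field and $1/e$ hypotheses enter, but the proof as written does not supply the lemma that actually uses them.

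The triangulated-equivalence paragraph is essentially the paper's argument: check on compact generators, match them up, identify Hom groups with bivariant $\cc A$-motivic cohomology, and use the first part. The paper is slightly more careful in routing this through the representability statement of Corollary~\ref{bruscor} and the schemewise stable equivalence of fibrant bispectra, but your outline captures the same content. The genuine gap is entirely in the ``upgrade from pt to $X$'' step.
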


\begin{proof}
By Lemma~\ref{brus} the bispectra $M_{\cc A\otimes\bb Z[1/e]}^{\bb G_m}(X)_f$ and $M_{\cc B\otimes\bb Z[1/e]}^{\bb G_m}(X)_f$
are motivically fibrant. Note that $M_{\cc A\otimes\bb Z[1/e]}^{\bb G_m}(X)_f$ and $M_{\cc B\otimes\bb Z[1/e]}^{\bb G_m}(|X)_f$ are
fibrant replacements of the bispectra
   $$H(\cc A\otimes\bb Z[1/e])(X):=(EM(\cc A(-,X)\otimes\bb Z[1/e]),EM(\cc A(-,X\wedge\bb G_m^{\wedge 1})\otimes\bb Z[1/e]),\ldots)$$
and
   $$H(\cc B\otimes\bb Z[1/e])(X):=(EM(\cc B(-,X)\otimes\bb Z[1/e]),EM(\cc B(-,X\wedge\bb G_m^{\wedge 1})\otimes\bb Z[1/e]),\ldots)$$
respectively, where ``$EM$" stands for the Eilenberg--Mac~Lane (symmetric) $S^1$-spectrum.

By our assumption, the natural morphism of bispectra
   \begin{equation}\label{chtoto}
    M_{\cc A\otimes\bb Z[1/e]}^{\bb G_m}(pt)_f\to M_{\cc B\otimes\bb Z[1/e]}^{\bb G_m}(pt)_f
   \end{equation}
induced by $f$ is a level local Nisnevich equivalence. Hence it is a level schemewise equivalence, because both bispectra
are motivically fibrant. Since $H(\cc A\otimes\bb Z[1/e])(X)\to M_{\cc A\otimes\bb Z[1/e]}^{\bb G_m}(X)_f$
and $H(\cc B\otimes\bb Z[1/e])(X)\to M_{\cc B\otimes\bb Z[1/e]}^{\bb G_m}(X)_f$ are level motivic equivalences,
then $H(\cc A\otimes\bb Z[1/e])(pt)\to H(\cc B\otimes\bb Z[1/e])(pt)$ is a level motivic equivalence of bispectra.

Consider a commutative diagram of bispectra
   $$\xymatrix{H(\cc A\otimes\bb Z[1/e])(pt)\wedge X_+\ar[d]\ar[r]&H(\cc A\otimes\bb Z[1/e])(X)\ar[r]\ar[d]&M_{\cc A\otimes\bb Z[1/e]}^{\bb G_m}(X)_f\ar[d]\\
               H(\cc B\otimes\bb Z[1/e])(pt)\wedge X_+\ar[r]&H(\cc B\otimes\bb Z[1/e])(X)\ar[r]&M_{\cc B\otimes\bb Z[1/e]}^{\bb G_m}(X)_f}$$
Since $H(\cc A\otimes\bb Z[1/e])(pt)\to H(\cc B\otimes\bb Z[1/e])(pt)$ is a level motivic equivalence of bispectra,
then so is the left vertical arrow of the diagram by~\cite[ 12.7]{Jar07}.
By the proof of generalized R\"ondigs--{\O}stv{\ae}r's theorem~\cite[5.3]{GG}
the left horizontal arrows are stable motivic equivalences, and hence so is the middle vertical
map. Since the right horizontal arrows are stable motivic equivalences, then so is the right vertical map.
But it is a stable motivic equivalence between motivically fibrant bispectra, and so it is a level
schemewise equivalence.

We see that each morphism of $S^1$-spectra
   $$EM(M_{\cc A\otimes\bb Z[1/e]}(X)(q))_f\to EM(M_{\cc B\otimes\bb Z[1/e]}(X)(q))_f$$
is a schemewise stable equivalence. But every such arrow is a local replacement of the morphism
$EM(M_{\cc A\otimes\bb Z[1/e]}(X)(q))\to EM(M_{\cc B\otimes\bb Z[1/e]}(X)(q))$. It follows that the latter arrow
is a local equivalence, and hence
   $$M_{\cc A}(X)(q)\otimes\bb Z[1/e]\to M_{\cc B}(X)(q)\otimes\bb Z[1/e]$$
is a quasi-isomorphism of complexes of Nisnevich sheaves.

Now to prove that the induced functor
   $$f:DM_{\cc A}^{eff}(k)[1/e]\to DM_{\cc B}^{eff}(k)[1/e]$$
is an equivalence of triangulated categories, we use a standard argument
for compactly generated triangulated categories. Precisely, it suffices to show that
the image of compact generators of the left category is a set of compact
generators of the right category and that Hom-sets between them on the right
and on the left are isomorphic. The families $\{M_{\cc A}(X)\otimes\bb Z[1/e][n]\mid X\in Sm/k,n\in\bb Z\}$,
$\{M_{\cc B}(X)\otimes\bb Z[1/e][n]\mid X\in Sm/k,n\in\bb Z\}$ are sets of compact generators
for $DM_{\cc A}^{eff}(k)[1/e]$ and $DM_{\cc B}^{eff}(k)[1/e]$ respectively.

The functor $f$ maps one family to another by construction. Also, Hom-sets between compact generators
from the first (respectively second) family is given by bivariant $\cc A$-motivic
cohomology $H^{*,*}_{\cc A}(X,Y)\otimes\bb Z[1/e]$ (respectively $H^{*,*}_{\cc B}(X,Y)\otimes\bb Z[1/e]$).
By the first part of the proof $F$ induces a schemewise stable equivalence of bispectra
   $$M_{\cc A\otimes\bb Z[1/e]}^{\bb G_m}(X)_f\to M_{\cc B\otimes\bb Z[1/e]}^{\bb G_m}(X)_f.$$
It follows from Corollary~\ref{bruscor} that the homomorphism
   $$f:H^{*,*}_{\cc A}(X,Y)\otimes\bb Z[1/e]\to H^{*,*}_{\cc B}(X,Y)\otimes\bb Z[1/e]$$
is an isomorphism, as was to be shown.
\end{proof}

The following statement is proved similar to the second part of Theorem~\ref{him}.

\begin{cor}\label{himcor}
Under the assumptions of Theorem~\ref{him} the canonical functor
   $$DM_{\cc A}(k)[1/e]\to DM_{\cc B}(k)[1/e]$$
is an equivalence of triangulated categories.
\end{cor}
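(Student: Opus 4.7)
The plan is to mimic the second part of the proof of Theorem~\ref{him}, transposed to the $\bb G_m$-stabilized setting of $DM_{\cc A}(k)[1/e]$ versus $DM_{\cc B}(k)[1/e]$. Both categories are compactly generated by the families $\{M_{\cc A}(X)(q)[n]\otimes\bb Z[1/e]\mid X\in Sm/k,\ q,n\in\bb Z\}$ and $\{M_{\cc B}(X)(q)[n]\otimes\bb Z[1/e]\mid X\in Sm/k,\ q,n\in\bb Z\}$ respectively, the only change from the effective case being that the twist $q$ now ranges over all integers. By construction $f$ takes the first family to the second, so by the standard compact-generator recipe it suffices to prove that $f$ induces bijections on Hom-sets between these generators.

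Next, I would identify the Hom-sets between compact generators with bivariant $\cc A$-motivic cohomology groups with $\bb Z[1/e]$-coefficients, now in all integer bidegrees. By Corollary~\ref{bruscor} these are computed as $SH(k)$-maps from suitable sphere-smashes of $X_+$ into the fibrant bispectrum $M_{\cc A\otimes\bb Z[1/e]}^{\bb G_m}(Y)_f$, and analogously on the $\cc B$-side. Motivic fibrancy of these bispectra, supplied by Lemma~\ref{brus} together with the cancellation hypothesis, is precisely what makes this representation valid for all integer twists and not just the non-negative ones used in the effective statement.

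The remaining input is already produced inside the proof of Theorem~\ref{him}: the natural morphism
$$M_{\cc A\otimes\bb Z[1/e]}^{\bb G_m}(X)_f\to M_{\cc B\otimes\bb Z[1/e]}^{\bb G_m}(X)_f$$
is a level schemewise equivalence between motivically fibrant bispectra, hence an isomorphism in $SH(k)$. Feeding this back into Corollary~\ref{bruscor} yields the sought isomorphism $H^{*,*}_{\cc A}(X,Y)\otimes\bb Z[1/e]\xrightarrow{\sim} H^{*,*}_{\cc B}(X,Y)\otimes\bb Z[1/e]$ on all Hom-sets between compact generators, and the compact-generator criterion concludes that $f$ is an equivalence.

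The only real subtlety, and the potential obstacle, is to make sure that the representability statement of Corollary~\ref{bruscor} really extends to arbitrary integer twists once one has stabilized in the $\bb G_m$-direction; but this is automatic, since the motivic fibrancy of the bispectra encodes the required $\bb G_m$-loops computing negative twists, so nothing essentially new is needed beyond what Theorem~\ref{him} has already supplied.
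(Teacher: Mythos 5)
Your proposal is correct and is essentially the argument the paper has in mind: the paper simply remarks that Corollary~\ref{himcor} "is proved similar to the second part of Theorem~\ref{him}", and you have faithfully carried out that transposition to the $\bb G_m$-stabilized setting, with the generator family now indexed by all integer twists $q\in\bb Z$ and the Hom-sets identified via Corollary~\ref{bruscor} (which indeed already covers all $p,q\in\bb Z$). The key input, namely that $M_{\cc A\otimes\bb Z[1/e]}^{\bb G_m}(X)_f\to M_{\cc B\otimes\bb Z[1/e]}^{\bb G_m}(X)_f$ is an isomorphism in $SH(k)$, is exactly what the first part of Theorem~\ref{him} supplies, and your use of it is precisely as intended.
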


The proof of Theorem~\ref{him} also allows to compare motives of
certain smooth algebraic varieties without inverting the
exponential characteristic of the base field. This is possible whenever
$U\in Sm/k$ is dualizable in $SH(k)$. For instance, it is shown in~\cite[Appendix]{Hu}
that any smooth projective variety $U\in Sm/k$ is dualizable in $SH(k)$ over any field $k$.
Namely, the following result is true:

\begin{thm}\label{dualaz}
Suppose $k$ is any field and $U\in Sm/k$ is dualizable in $SH(k)$ (e.g. $U$ is
a smooth projective variety). If the morphism of complexes of Nisnevich sheaves
   $f_*:\bb Z_{\cc A}(q)\to\bb Z_{\cc B}(q)$, $q\geq 0,$
is a quasi-isomorphism, then so is the morphism of twisted motives
   $M_{\cc A}(U)(q)\to M_{\cc B}(U)(q).$
\end{thm}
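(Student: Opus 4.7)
The plan is to follow the proof of Theorem~\ref{him} nearly verbatim, replacing only its single use of the generalized R\"ondigs--{\O}stv\ae r theorem~\cite[5.3]{GG} (which required perfectness of $k$ and inverting $e$) by an argument that exploits dualizability of $U$ in $SH(k)$ instead. As in Theorem~\ref{him}, the cancellation property (assumed throughout Section~3) together with Lemma~\ref{brus} implies that the bispectra $M_{\cc A}^{\bb G_m}(pt)_f$, $M_{\cc B}^{\bb G_m}(pt)_f$, $M_{\cc A}^{\bb G_m}(U)_f$, $M_{\cc B}^{\bb G_m}(U)_f$ are all motivically fibrant. The hypothesis that $f_*:\bb Z_{\cc A}(q)\to\bb Z_{\cc B}(q)$ is a quasi-isomorphism for every $q\geq 0$ forces the induced map $M_{\cc A}^{\bb G_m}(pt)_f\to M_{\cc B}^{\bb G_m}(pt)_f$ to be a level local Nisnevich equivalence, and hence, both bispectra being motivically fibrant, a level schemewise equivalence. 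Smashing with $U_+$ and invoking~\cite[12.7]{Jar07} yields a level motivic equivalence $H(\cc A)(pt)\wedge U_+\to H(\cc B)(pt)\wedge U_+$; this becomes the left vertical arrow of the diagram
$$\xymatrix{
H(\cc A)(pt)\wedge U_+\ar[d]\ar[r]&H(\cc A)(U)\ar[r]\ar[d]&M_{\cc A}^{\bb G_m}(U)_f\ar[d]\\
H(\cc B)(pt)\wedge U_+\ar[r]&H(\cc B)(U)\ar[r]&M_{\cc B}^{\bb G_m}(U)_f}$$
whose right horizontal arrows are stable motivic equivalences by construction.

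The main obstacle is establishing that the left horizontal maps $H(\cc A)(pt)\wedge U_+\to H(\cc A)(U)$ (and similarly for $\cc B$) are stable motivic equivalences without any perfectness or $[1/e]$ assumption. This is precisely the place where dualizability of $\Sigma^\infty U_+$ in $SH(k)$ substitutes for the R\"ondigs--{\O}stv\ae r step of Theorem~\ref{him}. Concretely, one uses Corollary~\ref{bruscor} together with the Spanier--Whitehead duality adjunction $(-)\wedge U_+\dashv\Map(DU,-)$, where $DU$ denotes the strong dual of $\Sigma^\infty U_+$ in $SH(k)$: for every $X\in Sm/k$ and every bidegree $(p,q)$ the composite $H(\cc A)(pt)\wedge U_+\to M_{\cc A}^{\bb G_m}(U)_f$ induces the chain of identifications
$SH(k)(X_+,S^{p,q}\wedge H(\cc A)(pt)\wedge U_+)\cong SH(k)(X_+\wedge DU,S^{p,q}\wedge M_{\cc A}^{\bb G_m}(pt)_f)\cong SH(k)(X_+,S^{p,q}\wedge M_{\cc A}^{\bb G_m}(U)_f)=H^{p,q}_{\cc A}(X,U)$,
so that the composite, and therefore its first constituent, is a stable motivic equivalence. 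The delicate point — essentially a dualizable-object analogue of R\"ondigs--{\O}stv\ae r that dispenses with perfectness and $[1/e]$ — is the middle identification, in which duality lets one read the bivariant motivic cohomology of $U$ as the bivariant motivic cohomology of $pt$ twisted by $DU$.

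With this key step settled, the remaining diagram chase is standard and identical to the end of the proof of Theorem~\ref{him}: two-out-of-three applied to the two squares successively shows that the middle and then the right vertical arrows are stable motivic equivalences; the right vertical, being a stable motivic equivalence between motivically fibrant bispectra, is in fact a level schemewise equivalence; at level $q$ this yields a schemewise stable equivalence $EM(M_{\cc A}(U)(q))_f\to EM(M_{\cc B}(U)(q))_f$ of Eilenberg--Mac~Lane $S^1$-spectra; and undoing the fibrant replacement exactly as in Theorem~\ref{him} shows that $M_{\cc A}(U)(q)\to M_{\cc B}(U)(q)$ is a local equivalence, i.e.\ a quasi-isomorphism of complexes of Nisnevich sheaves, as required.
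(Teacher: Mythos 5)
Your plan correctly isolates the same pivot as the paper's own proof: the argument of Theorem~\ref{him} goes through word for word once one knows that $H(\cc A)(pt)\wedge U_+\to H(\cc A)(U)$ and $H(\cc B)(pt)\wedge U_+\to H(\cc B)(U)$ are stable motivic equivalences, and dualizability of $U$ is what must be used to get this without perfectness or inverting $e$. The paper at this point simply observes that the \emph{proof} of the generalized R\"ondigs--{\O}stv{\ae}r theorem~\cite[5.3]{GG} already delivers this in the dualizable case, and stops. Your attempt instead tries to reprove it from scratch via Spanier--Whitehead duality, and this is where there is a genuine gap.

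The chain of identifications you propose is circular at exactly the step you label the ``delicate point.'' Applying duality once more to the middle term gives
$$SH(k)\bigl(X_+\wedge DU,\,S^{p,q}\wedge M_{\cc A}^{\bb G_m}(pt)_f\bigr)\;\cong\;SH(k)\bigl(X_+,\,S^{p,q}\wedge M_{\cc A}^{\bb G_m}(pt)_f\wedge U_+\bigr),$$
so the ``middle identification'' $SH(k)(X_+\wedge DU,S^{p,q}\wedge M_{\cc A}^{\bb G_m}(pt)_f)\cong SH(k)(X_+,S^{p,q}\wedge M_{\cc A}^{\bb G_m}(U)_f)$ is, after this unwinding and since the objects $S^{p,q}\wedge X_+$ generate $SH(k)$, precisely the assertion that $M_{\cc A}^{\bb G_m}(pt)_f\wedge U_+\simeq M_{\cc A}^{\bb G_m}(U)_f$ in $SH(k)$, i.e.\ that $H(\cc A)(pt)\wedge U_+\to H(\cc A)(U)$ is a stable motivic equivalence --- the very statement you are trying to establish. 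Nothing in Corollary~\ref{bruscor} or in the cancellation property, as invoked in your write-up, closes this loop: Corollary~\ref{bruscor} only computes $H^{p,q}_{\cc A}(X,Y)$ for $X\in Sm/k$, and $X_+\wedge DU$ is generally not (a suspension bispectrum of) a smooth scheme. What is actually needed here is the content of the proof of~\cite[5.3]{GG}, which builds the comparison $H(\cc A)(pt)\wedge(-)_+\to H(\cc A)(-)$ and verifies it is a stable motivic equivalence on the appropriate class of objects; the observation that the argument works for dualizable $U$ over any field is the substance of the paper's one-line proof, not something that falls out of an abstract duality shuffle. You should either cite the proof of~\cite[5.3]{GG} directly as the paper does, or supply an actual argument for the stable motivic equivalence $H(\cc A)(pt)\wedge U_+\to H(\cc A)(U)$ rather than restating it in dual form.
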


\begin{proof}
Since $U$ is dualizable in $SH(k)$, the proof of generalized R\"ondigs--{\O}stv{\ae}r's
theorem~\cite[5.3]{GG} shows that $H(\cc A)(pt)\wedge U_+\to H(\cc A)(U)$ and
$H(\cc B)(pt)\wedge U_+\to H(\cc B)(U)$ are stable motivic equivalences. It remains to
repeat the proof of the first part of Theorem~\ref{him} word for word.
\end{proof}

\begin{cor}\label{himcor2}
Suppose $k$ is any field and $\cc F$ is a family of smooth algebraic varieties
which are dualizable in $SH(k)$. Also, suppose that the morphism of 
complexes of Nisnevich sheaves $f_*:\bb Z_{\cc A}(q)\to\bb Z_{\cc B}(q)$, $q\geq 0,$
is a quasi-isomorphism.
Let $DM_{\cc A}^{eff}(k)\langle\cc F\rangle$ and $DM_{\cc B}^{eff}(k)\langle\cc F\rangle$
(respectively $DM_{\cc A}(k)\langle\cc F\rangle$ and $DM_{\cc B}(k)\langle\cc F\rangle$) be full
compactly generated triangulated subcategories of $DM_{\cc A}^{eff}(k)$ and $DM^{eff}_{\cc B}(k)$
(respectively $DM_{\cc A}(k)$ and $DM_{\cc B}(k)$)
generated by the motives $\{M_{\cc A}(U)[n]\mid U\in\cc F,n\in\bb Z\}$ and
$\{M_{\cc B}(U)[n]\mid U\in\cc F,n\in\bb Z\}$ (respectively
$\{M_{\cc A}^{\bb G_m}(U)\otimes\bb G_m^{\wedge q}[n]\mid U\in\cc F,n,q\in\bb Z\}$ and
$\{M_{\cc B}^{\bb G_m}(U)\otimes\bb G_m^{\wedge q}[n]\mid U\in\cc F,n,q\in\bb Z\}$).
Then the canonical functors
   $$DM_{\cc A}^{eff}(k)\langle\cc F\rangle\to DM_{\cc B}^{eff}(k)\langle\cc F\rangle,\quad
     DM_{\cc A}(k)\langle\cc F\rangle\to DM_{\cc B}(k)\langle\cc F\rangle$$
are equivalences of triangulated categories.
\end{cor}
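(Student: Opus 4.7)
The plan is to adapt the compact-generators argument from the second part of the proof of Theorem~\ref{him}, feeding in Theorem~\ref{dualaz} in place of the first-half input of that proof. The two subcategories $DM_{\cc A}^{eff}(k)\langle\cc F\rangle$ and $DM_{\cc B}^{eff}(k)\langle\cc F\rangle$ (and similarly in the stable setting) are compactly generated by the prescribed families by definition, and the canonical functor carries the source generators to the target generators by construction. By the standard criterion for equivalences between compactly generated triangulated categories, it therefore suffices to verify that the functor induces isomorphisms on graded Hom-sets between compact generators.

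For $U,V\in\cc F$ these Hom-groups are identified with bivariant motivic cohomology $H^{p,q}_{\cc A}(U,V)$ and $H^{p,q}_{\cc B}(U,V)$ (with $q=0$ in the effective case and $q\in\bb Z$ in the stable case), and by Corollary~\ref{bruscor} each such group is represented in $SH(k)$ by the motivically fibrant bispectrum $M_{\cc A}^{\bb G_m}(V)_f$, respectively $M_{\cc B}^{\bb G_m}(V)_f$. Hence the problem reduces to showing that the comparison morphism
$$M_{\cc A}^{\bb G_m}(V)_f \longrightarrow M_{\cc B}^{\bb G_m}(V)_f$$
is a stable motivic equivalence for every $V\in\cc F$.

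This is where Theorem~\ref{dualaz} enters: since every $V\in\cc F$ is dualizable in $SH(k)$ by assumption, Theorem~\ref{dualaz} supplies quasi-isomorphisms $M_{\cc A}(V)(q)\to M_{\cc B}(V)(q)$ of complexes of Nisnevich sheaves for all $q\geq 0$. Applying the Eilenberg--Mac~Lane functor level-wise yields a level local Nisnevich equivalence between the two bispectra, and since, by Lemma~\ref{brus} together with the standing cancellation hypothesis on $\cc A$ and $\cc B$, both bispectra are motivically fibrant, this level local equivalence is automatically a level schemewise equivalence, in particular a stable motivic equivalence. The non-effective statement is handled by exactly the same argument, since all $\bb G_m$-twists are already encoded in the bispectrum representation of Corollary~\ref{bruscor}. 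The main conceptual input has already been supplied by Theorem~\ref{dualaz}; everything else is formal manipulation with compactly generated triangulated categories, and the one point that requires monitoring---restriction to the subcategory generated by $\cc F$---is benign because Theorem~\ref{dualaz} applies to each dualizable $V\in\cc F$ independently.
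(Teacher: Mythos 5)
Your proof is correct and follows the same route as the paper: the paper's own proof of Corollary~\ref{himcor2} simply states that it follows from Theorem~\ref{dualaz} by repeating the compact-generators argument from the second part of the proof of Theorem~\ref{him}, which is exactly what you do. Your unpacking --- quasi-isomorphisms from Theorem~\ref{dualaz} give a level local equivalence of the bispectra, which between motivically fibrant objects (Lemma~\ref{brus} plus the standing cancellation hypothesis) is a level schemewise equivalence, hence an isomorphism on bivariant cohomology via Corollary~\ref{bruscor} --- is the same chain of implications the paper invokes when it appeals to ``the first part of the proof'' in the second half of Theorem~\ref{him}.
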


\begin{proof}
This follows from Theorem~\ref{dualaz} if we use the same proof as for the second part of Theorem~\ref{him}.
\end{proof}

Suppose $k$ is a perfect field. Consider natural functors between categories of correspondences
   $$\bb K_0\xrightarrow{\alpha} K_0^\oplus\xrightarrow{\beta} K_0\xrightarrow{\gamma}\corr,$$
where $\bb K_0$ is defined in~\cite{GP1}. All of these categories of correspondences
are symmetric monoidal strict $V$-categories of correspondences satisfying the cancellation
property. Moreover, $\alpha,\beta,\gamma$ are strict symmetric monoidal functors
which are the identities on objects.
They induce morphisms of complexes of Nisnevich sheaves
   $$\bb Z_{\bb K_0}(q)\xrightarrow{\alpha}\bb Z_{K_0^\oplus}(q)\xrightarrow{\beta}\bb Z_{K_0}(q)\xrightarrow{\gamma}\bb Z(q),\quad q\geq 0.$$
By Suslin's theorem~\cite{Sus} $\gamma\beta$ is a quasi-isomorphism. Walker~\cite[6.5]{Wlk1} proved that
$\gamma$ is a quasi-isomorphism. We see that $\beta$ is a quasi-isomorphism as well.
Also, $\alpha$ is an isomorphism by~\cite[7.2]{GP1} (over any base field).

As an application of Theorem~\ref{him} and Corollary~\ref{himcor} we can now deduce the following

\begin{thm}\label{him1}
Suppose $k$ is a perfect field of exponential characteristic $e$.
Then for every $X\in Sm/k$ the morphism
of twisted motives of $X$ with $\bb Z[1/e]$-coefficients
   $$M_{\bb K_0}(X)(q)\otimes\bb Z[1/p]\xrightarrow{\alpha}M_{K_0^\oplus}(X)(q)\otimes\bb Z[1/e]\xrightarrow{\beta}
     M_{K_0}(X)(q)\otimes\bb Z[1/e]\xrightarrow{\gamma}M(X)(q)\otimes\bb Z[1/e]$$
are quasi-isomorphisms of complexes of Nisnevich sheaves. Furthermore, the induced functors
   $$DM_{\bb K_0}^{eff}(k)[1/e]\xrightarrow{\alpha}DM_{K_0^\oplus}^{eff}(k)[1/e]\xrightarrow{\beta}
     DM_{K_0}^{eff}(k)[1/e]\xrightarrow{\gamma}DM^{eff}(k)[1/e]$$
and
   $$DM_{\bb K_0}(k)[1/e]\xrightarrow{\alpha}DM_{K_0^\oplus}(k)[1/e]\xrightarrow{\beta}
     DM_{K_0}(k)[1/e]\xrightarrow{\gamma}DM(k)[1/e]$$
are equivalences of triangulated categories.
\end{thm}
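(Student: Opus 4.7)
The plan is to apply Theorem~\ref{him} and Corollary~\ref{himcor} sequentially to each of the three functors $\alpha\colon\bb K_0\to K_0^\oplus$, $\beta\colon K_0^\oplus\to K_0$, and $\gamma\colon K_0\to\corr$. All structural hypotheses have in fact been set up in the paragraph preceding the statement: the four categories $\bb K_0$, $K_0^\oplus$, $K_0$, $\corr$ are symmetric monoidal strict $V$-categories of correspondences satisfying the cancellation property, and $\alpha$, $\beta$, $\gamma$ are strict symmetric monoidal functors which are the identity on objects. So the only genuine input I need is the quasi-isomorphism hypothesis on the motivic complexes.

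For that hypothesis I would collect the three already-cited results. First, $\alpha\colon\bb Z_{\bb K_0}(q)\to\bb Z_{K_0^\oplus}(q)$ is an isomorphism by~\cite[7.2]{GP1}, over any base field. Second, Walker's theorem~\cite[6.5]{Wlk1} gives that $\gamma\colon\bb Z_{K_0}(q)\to\bb Z(q)$ is a quasi-isomorphism. Third, Suslin's theorem~\cite{Sus} gives that the composite $\gamma\beta\colon\bb Z_{K_0^\oplus}(q)\to\bb Z(q)$ is a quasi-isomorphism, whence by two-out-of-three $\beta\colon\bb Z_{K_0^\oplus}(q)\to\bb Z_{K_0}(q)$ is also a quasi-isomorphism. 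Tensoring each of these three maps with $\bb Z[1/e]$ (which is exact) therefore gives the quasi-isomorphism hypothesis of Theorem~\ref{him} for each of $\alpha$, $\beta$, $\gamma$.

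With the hypotheses verified, the conclusion is essentially automatic. Applying Theorem~\ref{him} to each of $\alpha$, $\beta$, $\gamma$ yields the three quasi-isomorphisms
\[
   M_{\bb K_0}(X)(q)\otimes\bb Z[1/e]\xrightarrow{\alpha} M_{K_0^\oplus}(X)(q)\otimes\bb Z[1/e]\xrightarrow{\beta}M_{K_0}(X)(q)\otimes\bb Z[1/e]\xrightarrow{\gamma}M(X)(q)\otimes\bb Z[1/e]
\]
of complexes of Nisnevich sheaves, and the three corresponding equivalences of the effective triangulated categories $DM_{\cc A}^{eff}(k)[1/e]$. Applying Corollary~\ref{himcor} to each of the three functors gives the equivalences of the stable triangulated categories $DM_{\cc A}(k)[1/e]$.

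There is no serious obstacle here: the theorem is an application of Theorem~\ref{him} and Corollary~\ref{himcor} to three concrete functors, with the only nontrivial step being the identification of the map $\beta$ on motivic complexes as a quasi-isomorphism, which is obtained from the theorems of Suslin and Walker by two-out-of-three. All remaining work has been absorbed into Theorem~\ref{him}, Corollary~\ref{himcor}, and the preliminary observation that the four categories in the chain satisfy the cancellation property.
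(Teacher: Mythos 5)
Your proposal is correct and follows essentially the same route as the paper: the paper itself has no separate proof environment for this theorem, but states it immediately after recording exactly the facts you cite ($\alpha$ an isomorphism by~\cite[7.2]{GP1}, $\gamma$ a quasi-isomorphism by Walker, $\gamma\beta$ a quasi-isomorphism by Suslin, hence $\beta$ a quasi-isomorphism by two-out-of-three) and then invoking Theorem~\ref{him} and Corollary~\ref{himcor}. Your added remark that one should tensor with the flat ring $\bb Z[1/e]$ to pass from integral quasi-isomorphisms to the hypothesis of Theorem~\ref{him} is a small but accurate clarification that the paper leaves implicit.
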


We also have the following application of Theorem~\ref{dualaz}.

\begin{thm}\label{dualaz2}
Suppose $k$ is a perfect field and $U\in Sm/k$ is dualizable in $SH(k)$ (e.g. $U$ is
a smooth projective variety). Then the morphisms of twisted motives
   $$M_{\bb K_0}(U)(q)\xrightarrow{\alpha}M_{K_0^\oplus}(U)(q)\xrightarrow{\beta}
     M_{K_0}(U)(q)\xrightarrow{\gamma}M(U)(q),\quad q\geq 0,$$
are quasi-isomorphisms of complexes of Nisnevich sheaves.
\end{thm}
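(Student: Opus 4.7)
The plan is to reduce Theorem~\ref{dualaz2} to three successive applications of Theorem~\ref{dualaz}, one for each of the functors $\alpha$, $\beta$, $\gamma$. Theorem~\ref{dualaz} is precisely designed for this sort of statement: whenever we have a functor of symmetric monoidal strict $V$-categories of correspondences which induces a quasi-isomorphism of motivic complexes $\bb Z_{\cc A}(q)\to\bb Z_{\cc B}(q)$, dualizability of $U\in Sm/k$ in $SH(k)$ is exactly the hypothesis needed to upgrade this to a quasi-isomorphism $M_{\cc A}(U)(q)\to M_{\cc B}(U)(q)$ without inverting the exponential characteristic. So the work reduces to checking that the three hypotheses on motivic complexes hold integrally for $\alpha$, $\beta$, $\gamma$, which has already been recorded in the discussion immediately preceding Theorem~\ref{him1}.

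Concretely, I would proceed as follows. First, I would note that $\bb K_0$, $K_0^\oplus$, $K_0$ and $\corr$ are all symmetric monoidal strict $V$-categories of correspondences satisfying the cancellation property, and that $\alpha$, $\beta$, $\gamma$ are strict symmetric monoidal functors which are the identity on objects, so they fit the setting of Theorem~\ref{dualaz}. Next, I would invoke the three input results collected before Theorem~\ref{him1}: by~\cite[7.2]{GP1}, $\alpha:\bb Z_{\bb K_0}(q)\to\bb Z_{K_0^\oplus}(q)$ is already an isomorphism; by Walker~\cite[6.5]{Wlk1}, the map $\gamma\beta:\bb Z_{K_0^\oplus}(q)\to\bb Z(q)$ is a quasi-isomorphism, and by Suslin~\cite{Sus}, $\gamma:\bb Z_{K_0}(q)\to\bb Z(q)$ is also a quasi-isomorphism, from which it follows formally that $\beta:\bb Z_{K_0^\oplus}(q)\to\bb Z_{K_0}(q)$ is a quasi-isomorphism as well. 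Thus all three hypotheses of Theorem~\ref{dualaz} are verified for each of $\alpha$, $\beta$, $\gamma$ separately.

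Now I would apply Theorem~\ref{dualaz} three times. Since $U\in Sm/k$ is dualizable in $SH(k)$ (for instance, this is automatic if $U$ is smooth projective, by~\cite[Appendix]{Hu}), each application yields a quasi-isomorphism of complexes of Nisnevich sheaves
\begin{equation*}
M_{\bb K_0}(U)(q)\xrightarrow{\alpha}M_{K_0^\oplus}(U)(q),\quad
M_{K_0^\oplus}(U)(q)\xrightarrow{\beta}M_{K_0}(U)(q),\quad
M_{K_0}(U)(q)\xrightarrow{\gamma}M(U)(q),
\end{equation*}
for every $q\geq 0$. Stringing these together gives the theorem.

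There is essentially no obstacle once Theorem~\ref{dualaz} is in hand, since the computation of the effect of $\alpha$, $\beta$, $\gamma$ on the motivic complexes $\bb Z_{(-)}(q)$ has already been carried out by Suslin, Walker, and Garkusha--Panin. The only point that requires a moment of attention is the derivation that $\beta$ is a quasi-isomorphism from the quasi-isomorphism statements for $\gamma$ and $\gamma\beta$; this is a two-out-of-three argument in the derived category of Nisnevich sheaves and poses no difficulty.
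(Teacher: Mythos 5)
Your proposal is correct and matches the paper's intended derivation: the paper states Theorem~\ref{dualaz2} without proof as an application of Theorem~\ref{dualaz}, and your three successive applications of that theorem, together with the integral quasi-isomorphism statements for $\alpha$, $\beta$, $\gamma$ recorded before Theorem~\ref{him1}, is exactly the argument. One small attribution slip: the paper credits Suslin~\cite{Sus} with the quasi-isomorphism for $\gamma\beta$ and Walker~\cite[6.5]{Wlk1} with the one for $\gamma$, whereas you have swapped the two; this does not affect the two-out-of-three deduction that $\beta$ is a quasi-isomorphism, but the citations should be corrected.
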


\begin{cor}\label{himcor3}
Suppose $k$ is a perfect field and $\cc F$ is a family of smooth algebraic varieties
which are dualizable in $SH(k)$. Under the notation of Corollary~\ref{himcor2} the functors
   $$DM_{\bb K_0}^{eff}(k)\langle\cc F\rangle\xrightarrow{\alpha}DM_{K_0^\oplus}^{eff}(k)\langle\cc F\rangle\xrightarrow{\beta}
     DM_{K_0}^{eff}(k)\langle\cc F\rangle\xrightarrow{\gamma}DM^{eff}(k)\langle\cc F\rangle$$
and
   $$DM_{\bb K_0}(k)\langle\cc F\rangle\xrightarrow{\alpha}DM_{K_0^\oplus}(k)\langle\cc F\rangle\xrightarrow{\beta}
     DM_{K_0}(k)\langle\cc F\rangle\xrightarrow{\gamma}DM(k)\langle\cc F\rangle$$
are equivalences of triangulated categories.
\end{cor}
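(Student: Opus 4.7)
The plan is to reduce Corollary~\ref{himcor3} to an iterated application of Corollary~\ref{himcor2} to each of the functors $\alpha$, $\beta$, $\gamma$ in turn. The key point is that the quasi-isomorphisms of motivic complexes required as input to Corollary~\ref{himcor2} have already been established in the discussion immediately preceding Theorem~\ref{him1}, and those quasi-isomorphisms hold over $\bb Z$, without inverting the exponential characteristic.

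In detail, I would first recall from that discussion that $\bb K_0$, $K_0^\oplus$, $K_0$, $\corr$ are symmetric monoidal strict $V$-categories of correspondences satisfying the cancellation property, and that $\alpha$, $\beta$, $\gamma$ are strict symmetric monoidal functors which are the identities on objects; the general setup of Corollary~\ref{himcor2} is therefore met. Next I would verify that $f_*\colon\bb Z_{\cc A}(q)\to\bb Z_{\cc B}(q)$ is a quasi-isomorphism of complexes of Nisnevich sheaves for each of the three functors: $\alpha$ is in fact an isomorphism by \cite{GP1}; the composite $\gamma\beta$ is a quasi-isomorphism by Suslin's theorem \cite{Sus}; $\gamma$ is a quasi-isomorphism by Walker's theorem \cite{Wlk1}; hence $\beta$ is a quasi-isomorphism by two-out-of-three. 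None of these statements requires inverting $e$.

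With these inputs in hand, Corollary~\ref{himcor2} applies verbatim to each of $\alpha$, $\beta$, $\gamma$ and the given family $\cc F$ of varieties dualizable in $SH(k)$, producing equivalences of triangulated categories $DM_{\cc A}^{eff}(k)\langle\cc F\rangle \xrightarrow{\sim} DM_{\cc B}^{eff}(k)\langle\cc F\rangle$ and $DM_{\cc A}(k)\langle\cc F\rangle \xrightarrow{\sim} DM_{\cc B}(k)\langle\cc F\rangle$ for each adjacent pair $(\cc A,\cc B)$ along the chain. Composing the three resulting equivalences in each row yields precisely the asserted chains of equivalences. No serious obstacle is expected: the genuine work is already packaged inside Corollary~\ref{himcor2}, which itself rests on Theorem~\ref{dualaz} and the compactly-generated-category argument from the second half of the proof of Theorem~\ref{him}. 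The only point worth emphasising is that the chain of quasi-isomorphisms of motivic complexes is valid integrally, which is exactly what allows us to avoid inverting $e$ in the statement.
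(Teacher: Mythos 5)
Your proof is correct and follows essentially the same route the paper intends: Corollary~\ref{himcor3} is a direct three-fold application of Corollary~\ref{himcor2} to $\alpha$, $\beta$, $\gamma$, using the integral quasi-isomorphisms $\bb Z_{\bb K_0}(q)\to\bb Z_{K_0^\oplus}(q)\to\bb Z_{K_0}(q)\to\bb Z(q)$ established over perfect fields in the paragraph preceding Theorem~\ref{him1}, followed by composing the resulting equivalences. Your emphasis on the fact that these quasi-isomorphisms hold integrally (so no inversion of $e$ is needed) while the perfectness hypothesis is what makes Suslin's and Walker's theorems applicable is exactly the right observation.
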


Another application of Theorem~\ref{him} is for the bivariant motivic
spectral sequence in the sense of~\cite[7.9]{GP}
   $$E_2^{pq}=H^{p-q,-q}_{K_0^\oplus}(U,X)\Longrightarrow K_{-p-q}(U,X),\quad U\in Sm/k.$$
It is Grayson's motivic specttral sequence~\cite{Gr} applied to bivariant algebaraic
$K$-theory of smooth algebraic varieties (see~\cite[Section~7]{GP} for details and definitions).
The spectral sequence is strongly convergent and the following relation is true (over perfect fields) by~\cite[7.8]{GP}:
   $$H^{p,q}_{K_0^\oplus}(U,X)=DM_{K_0^\oplus}^{eff}(k)(M_{K_0^\oplus}(U),M_{K_0^\oplus}(X)(q)[p-2q]).$$
The latter relation together with Theorem~\ref{him1} imply the relation
   $$H^{p,q}_{K_0^\oplus}(U,X)\otimes\bb Z[1/e]=H^{p,q}_{\cc M}(U,X)\otimes\bb Z[1/e].$$
In turn, if $X\in Sm/k$ is dualizable in $SH(k)$, then Theorem~\ref{dualaz2} implies the relation
  $$H^{p,q}_{K_0^\oplus}(U,X)=H^{p,q}_{\cc M}(U,X).$$

Thus we have proved the following

\begin{thm}\label{him2}
Suppose $k$ is a perfect field of exponential characteristic $e$.
Then the bivariant motivic spectral sequence with $\bb Z[1/e]$-coefficients takes the form
   $$E_2^{pq}=H^{p-q,-q}_{\cc M}(U,X)\otimes\bb Z[1/e]\Longrightarrow K_{-p-q}(U,X)\otimes\bb Z[1/e]$$
for all $U, X\in Sm/k$. Furthermore, if $X\in Sm/k$ is dualizable in $SH(k)$ (e.g. $X$ is
a smooth projective variety), then for every $U\in Sm/k$ the bivariant motivic spectral 
sequence takes the form
   $$E_2^{pq}=H^{p-q,-q}_{\cc M}(U,X)\Longrightarrow K_{-p-q}(U,X).$$
\end{thm}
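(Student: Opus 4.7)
The proof will be essentially a book-keeping exercise on top of the main technical theorems already in the paper; the author has effectively laid the ingredients out in the paragraph preceding the statement, and I would just assemble them cleanly.

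First I would record the starting point: the bivariant motivic spectral sequence of Grayson--Garkusha--Panin \cite[7.9]{GP},
$$E_2^{pq}=H^{p-q,-q}_{K_0^\oplus}(U,X)\Longrightarrow K_{-p-q}(U,X),\qquad U\in Sm/k,$$
is strongly convergent. Since $\bb Z[1/e]$ is flat over $\bb Z$, tensoring the underlying exact couple with $\bb Z[1/e]$ preserves strong convergence and yields a strongly convergent spectral sequence
$$E_2^{pq}=H^{p-q,-q}_{K_0^\oplus}(U,X)\otimes\bb Z[1/e]\Longrightarrow K_{-p-q}(U,X)\otimes\bb Z[1/e].$$
So the only task is to identify the $E_2$-term with motivic cohomology after inverting $e$.

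Next I would invoke \cite[7.8]{GP}, which gives, over a perfect field,
$$H^{p,q}_{K_0^\oplus}(U,X)=DM_{K_0^\oplus}^{eff}(k)\bigl(M_{K_0^\oplus}(U),M_{K_0^\oplus}(X)(q)[p-2q]\bigr),$$
and the analogous formula with $\corr$ in place of $K_0^\oplus$. By Theorem~\ref{him1} the functor $DM_{K_0^\oplus}^{eff}(k)[1/e]\to DM^{eff}(k)[1/e]$ is an equivalence of triangulated categories sending $M_{K_0^\oplus}(U)$, $M_{K_0^\oplus}(X)(q)$ to $M(U)\otimes\bb Z[1/e]$, $M(X)(q)\otimes\bb Z[1/e]$ respectively. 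Taking Hom-sets across this equivalence yields
$$H^{p,q}_{K_0^\oplus}(U,X)\otimes\bb Z[1/e]\;\cong\;H^{p,q}_{\cc M}(U,X)\otimes\bb Z[1/e],$$
and substituting this into the spectral sequence proves the first assertion.

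For the second assertion, with $X\in Sm/k$ dualizable in $SH(k)$, I would instead appeal to Theorem~\ref{dualaz2}: the morphisms of twisted motives $M_{K_0^\oplus}(X)(q)\to M(X)(q)$ are integral quasi-isomorphisms of Nisnevich sheaves of complexes, hence induce isomorphisms on Nisnevich hypercohomology, giving
$$H^{p,q}_{K_0^\oplus}(U,X)\;\cong\;H^{p,q}_{\cc M}(U,X)$$
for every $U\in Sm/k$ without inverting $e$. Plugging this into the original unlocalized spectral sequence of \cite[7.9]{GP} delivers the integral statement.

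The only step that might require even a sentence of justification is the flatness argument at the start — that $\otimes\bb Z[1/e]$ genuinely passes through the spectral sequence while preserving strong convergence — but this is completely formal since $\bb Z[1/e]$ is a localization of $\bb Z$. All the substantive content has been absorbed by Theorems~\ref{him1} and~\ref{dualaz2}, so there is no real obstacle beyond citing them correctly.
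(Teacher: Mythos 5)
Your proof is correct and follows essentially the same route as the paper: start from the strongly convergent bivariant spectral sequence of \cite[7.9]{GP}, identify the $E_2$-term via \cite[7.8]{GP} together with Theorem~\ref{him1} (localized case) or Theorem~\ref{dualaz2} (dualizable case), and substitute. The only material you add is the explicit remark that tensoring a strongly convergent spectral sequence with the flat $\bb Z$-module $\bb Z[1/e]$ preserves strong convergence, which the paper leaves implicit; this is a welcome clarification and does not change the argument.
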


Thus the preceding theorem says that the classical motivic spectral sequence starting from
motivic cohomology and converging to $K$-theory can be extended to bivariant motivic cohomology and
bivariant $K$-theory on all smooth algebraic varieties after inverting the exponential characteristic.
In turn, if the second argument is dualizable in $SH(k)$ then the exponential characteristic
inversion is not necessary.


\begin{thebibliography}{99}

\bibitem{GG} G. Garkusha, Reconstructing rational stable motivic homotopy theory, preprint arXiv:1705.01635.
\bibitem{GP} G. Garkusha, I. Panin, K-motives of algebraic varieties, Homology, Homotopy Appl. 14(2) (2012), 211-264.
\bibitem{GP1} G. Garkusha, I. Panin, The triangulated category of K-motives $DK_{-}^{eff}(k)$, J. K-theory, 14(1) (2014), 103-137.
\bibitem{GP2} G. Garkusha, I. Panin, On the motivic spectral sequence, J. Inst. Math. Jussieu 17(1) (2018), 137-170.
\bibitem{Gr} D. Grayson, Weight filtrations via commuting automorphisms, {K-Theory} {9} (1995), 139-172.
\bibitem{Jar07} J. F. Jardine, Fields Lectures: Presheaves of spectra, 2007. Also available online at
         www-home.math.uwo.ca/$\sim$jardine/papers/Fields-02.pdf
\bibitem{Hu} P. Hu, On the Pickard group of the $\bb A^1$-stable homotopy category, Topology 44 (2005), 609-640.
\bibitem{Sus} A. Suslin, On the Grayson spectral sequence, Tr. Mat. Inst. Steklova
         241 (2003), 218-253. (Russian). English transl. in  Proc. Steklov Inst. Math. 241(2) (2003), 202-237.
\bibitem{SV1} A. Suslin, V. Voevodsky, Bloch--Kato conjecture and motivic cohomology with
         finite coefficients, The Arithmetic and Geometry of Algebraic Cycles
         (Banff, AB, 1998), NATO Sci. Ser. C Math. Phys. Sci., Vol. 548, Kluwer Acad. Publ., Dordrecht (2000), pp. 117-189.
\bibitem{Voe1} V. Voevodsky, Triangulated category of motives over a field, in Cycles, Transfers and Motivic Homology Theories (V.
         Voevodsky, A. Suslin and E. Friedlander, eds.), Annals of Math. Studies, Princeton University Press, 2000.
\bibitem{Wlk} M. E. Walker, Motivic cohomology and the K-theory of
         automorphisms, PhD Thesis, University of Illinois at Urbana-Champaign, 1996.
\bibitem{Wlk1} M. E. Walker, Thomason's theorem for varieties over algebraically closed fields, Trans. Amer. 
         Math. Soc. 356(7) (2003), 2569-2648.
\end{thebibliography}
\end{document}